\numberwithin{equation}{section}
\theoremstyle{plain} 
\newtheorem{theorem}{\indent\sc Theorem}[section]
\newtheorem{lemma}[theorem]{\indent\sc Lemma}
\newtheorem{corollary}[theorem]{\indent\sc Corollary}
\theoremstyle{definition} 
\title[Optimal lower estimate for the first eigenvalue of the $p$-Laplacian]{Optimal lower estimate for the first eigenvalue of the $p$-Laplacian in the Euclidean sphere}
\author[F.R. dos Santos and M.N. Soares]{F\'abio R. dos Santos$^{\ast}$ and Matheus N. Soares}
\address{
Departamento de Matem\'atica \\
Universidade Federal de Pernambuco \\
50.740-540 Recife, Pernambuco \\
Brazil}
\email{fabio.reis@ufpe.br}
\email{matheus.nsoares@ufpe.br}
\keywords{Compact submanifods, first eigenvalue, $p$-Laplacian}
\subjclass[2020]{Primary 53C42; Secondary 53A10, 53C20.}
\thanks{$^{\ast}$Corresponding author}
\begin{document}

\begin{abstract}
An integral inequality is derived for compact submanifolds (with or without boundary) in the unit sphere. This result leads to a characterization of spheres.\end{abstract}

\maketitle

\section{Introduction and statements of the main result}

Given a compact Riemannian manifold $M^{n}$ we define the $p$-Laplacian on $M^{n}$ as the second order quasilinear elliptic operator
\begin{equation}\label{eq:1.0}
\Delta_{p}f=-{\rm div}(|\nabla f|^{p-2}\nabla f),\quad 1<p<\infty.
\end{equation}
When $p=2$, it is the usual Laplacian. In a similar way, we can consider the eigenvalue problem of $\Delta_{p}$. We say that a real number $\lambda$ is a Dirichlet (or Neumann) eigenvalue if there exists a non-zero function $f$ satisfying the following equation with Dirichlet boundary condition $f=0$ on $\partial M$ (or Neumann boundary condition $u=\partial f/\partial\eta=0$ on $\partial M$):
\begin{equation}\label{eq:1.1}
\Delta_{p}f=\lambda|f|^{p-2}f\quad\mbox{in}\quad M^{n},
\end{equation}
where $\eta$ is the outward pointing normal unit vector field to $\partial M$.

According to \cite{Lindqvist:15}, these numbers $\lambda$ forms a non-increasing sequence such that there exists an isolated minimum eigenvalue called {\em the first eigenvalue} of the $p$-Laplacian (see also~\cite{le:06}). So, the first nontrivial Dirichlet and Neumann eigenvalue of $M^{n}$ are given by
\begin{equation*}\label{eq:1.2}
\mu_{1,p}(M)=\inf\left\{\dfrac{\int_{M}|\nabla f|^{p}dM}{\int_{M}|f|^{p}dM}\,;\,f\in W^{1,p}_{0}(M)\backslash\{0\}\right\},
\end{equation*}
and
\begin{equation}\label{eq:1.3}
\lambda_{1,p}(M)=\inf\left\{\dfrac{\int_{M}|\nabla f|^{p}dM}{\int_{M}|f|^{p}dM}\,;\,f\in W^{1,p}(M)\backslash\{0\}\,\,\mbox{and}\,\int_{M}|f|^{p-2}f\,dM=0\right\}.
\end{equation}
Since $\lambda_{1,p}(M)$ is obtained by imitating the closed case proof (see~\cite{Veron:91}), we will use the same notation to denote the first eigenvalue of $p$-laplacian of $M^{n}$ in the closed case.

In the geometric analysis theory, a natural way exists to relate the geometric properties of a Riemannian manifold and the $p$-Laplacian through its eigenvalues. The interaction between these objects and the geometry were approached in Matei \cite{Matei:00} which generalized many geometric intrinsic results involving the first eigenvalue of the $p$-Laplacian that, in principle, was proved only for the usual Laplacian. Among these results we highlight the generalization for $p>2$ of the Chern's comparison of geodesic balls, an extension of Faber-Krahn inequality and a generalization of Lichnerowicz-Obata theorem. 

On the other hand, in the context of isometric immersions in the unit sphere $\mathbb{S}^{N}$, Leung~\cite{Leung:83} established a sharp estimate between the eigenvalues of the usual Laplacian for minimal submanifolds of sphere through an integral inequality. In fact, Leung presented a lower bound for the square length of the second fundamental form $S$ via any eigenvalue of the Laplace-Beltrami operator. Besides that, he showed that spheres are obtained when the equality happens. Years later, Liu and Zhang \cite{Liu:07} proved a similar estimate for arbitrary submanifolds of sphere. More precisely, they proved
\begin{theorem}\label{thm:1.1}
Let $M^n$ be an $n$-dimensional compact orientable submanifold immersed in the standard Euclidean sphere $\mathbb{S}^{N}$. Let $f$ be an eigenfunction associated to $\lambda_{1,2}(M)$, then
\begin{equation}\label{eq:1.5}
\int_M\left(\frac{\sqrt{n-1}}{2}S-\dfrac{(n-1)(n-\lambda_{1,2}(M))}{n}\right)|\nabla f|^{2}dM\geq0,
\end{equation}
where $dM$ is the volume element on $M^{n}$. In particular, if $S$ is a constant, then 
\begin{equation}\label{eq:1.6}
S\geq\dfrac{2\sqrt{n-1}}{n}(n-\lambda_{1,2}(M)).
\end{equation}
\end{theorem}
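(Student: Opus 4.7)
The plan is to combine the Bochner identity with the Gauss equation for a submanifold of the sphere, reducing Theorem~\ref{thm:1.1} to a pointwise algebraic inequality on the second fundamental form.

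First, I would apply Bochner's formula to the eigenfunction $f$. Since $\Delta_{2}f = \lambda_{1,2}(M)f$ with the sign convention of \eqref{eq:1.0}, integration over the closed manifold $M$ yields
\begin{equation*}
\int_{M}|\operatorname{Hess} f|^{2}\, dM + \int_{M}\operatorname{Ric}(\nabla f, \nabla f)\, dM = \lambda_{1,2}(M)\int_{M}|\nabla f|^{2}\, dM.
\end{equation*}
The pointwise bound $|\operatorname{Hess} f|^{2} \geq (\operatorname{tr}\operatorname{Hess} f)^{2}/n = \lambda_{1,2}(M)^{2}f^{2}/n$, combined with the Rayleigh identity $\lambda_{1,2}(M)\int_{M}f^{2}\, dM = \int_{M}|\nabla f|^{2}\, dM$, gives $\int_{M}|\operatorname{Hess} f|^{2}\, dM \geq \frac{\lambda_{1,2}(M)}{n}\int_{M}|\nabla f|^{2}\, dM$. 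Plugging in the Gauss identity
\begin{equation*}
\operatorname{Ric}(X,X) = (n-1)|X|^{2} + n\langle H, h(X,X)\rangle - \sum_{k}|h(X,e_{k})|^{2}
\end{equation*}
for $M^{n}\hookrightarrow \mathbb{S}^{N}$ (with $h$ the second fundamental form and $H$ the mean curvature vector) and rearranging, I obtain the intermediate bound
\begin{equation*}
\frac{(n-1)(n - \lambda_{1,2}(M))}{n}\int_{M}|\nabla f|^{2}\, dM \leq \int_{M}\Bigl[\sum_{k}|h(\nabla f, e_{k})|^{2} - n\langle H, h(\nabla f, \nabla f)\rangle\Bigr]\, dM.
\end{equation*}

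The crux is then the pointwise inequality
\begin{equation*}
\sum_{k}|h(X,e_{k})|^{2} - n\langle H, h(X,X)\rangle \leq \tfrac{\sqrt{n-1}}{2}\, S\, |X|^{2}
\end{equation*}
valid for every tangent vector $X$. At a fixed point I would choose an orthonormal tangent frame $\{e_{i}\}$ with $e_{1}=X/|X|$ together with a normal frame $\{\nu_{\alpha}\}$, and set $h^{\alpha}_{ij} = \langle h(e_{i},e_{j}),\nu_{\alpha}\rangle$. After cancellation of the $(h^{\alpha}_{11})^{2}$ terms the inequality reduces, for each $\alpha$, to
\begin{equation*}
\sum_{k\geq 2}(h^{\alpha}_{1k})^{2} - h^{\alpha}_{11}\sum_{k\geq 2} h^{\alpha}_{kk} \leq \tfrac{\sqrt{n-1}}{2}\sum_{i,j}(h^{\alpha}_{ij})^{2},
\end{equation*}
which I would derive by combining the Cauchy--Schwarz estimate $\bigl(\sum_{k\geq 2} h^{\alpha}_{kk}\bigr)^{2} \leq (n-1)\sum_{k\geq 2}(h^{\alpha}_{kk})^{2}$ with the completed square $\bigl(\sqrt{n-1}\, h^{\alpha}_{11} + \sum_{k\geq 2} h^{\alpha}_{kk}\bigr)^{2} \geq 0$. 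Summation over $\alpha$ yields the pointwise bound, and integrating it against $|\nabla f|^{2}$ and substituting into the intermediate bound produces \eqref{eq:1.5}; inequality \eqref{eq:1.6} then follows from \eqref{eq:1.5} by dividing through by $\int_{M}|\nabla f|^{2}\,dM$ when $S$ is constant.

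The main obstacle is pinpointing the sharp constant $\sqrt{n-1}/2$ in the pointwise inequality above. The factor $\sqrt{n-1}$ arises precisely from applying Cauchy--Schwarz to the $n-1$ diagonal entries of $h^{\alpha}$ in directions orthogonal to $X/|X|$, and it is what makes the estimate sharp on a totally geodesic round sphere $\mathbb{S}^{n}\subset\mathbb{S}^{N}$, where $S\equiv 0$ and $\lambda_{1,2}(M)=n$.
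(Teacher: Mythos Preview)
Your argument is correct. The analytic half---integrated Bochner plus the trace bound $|\operatorname{Hess} f|^{2}\geq(\Delta f)^{2}/n$---is exactly what the paper's Lemma~\ref{lem:2.2} collapses to for $p=2$ on a closed manifold. The genuine difference is in how the pointwise Ricci lower bound
\[
\operatorname{Ric}(X,X)\ \geq\ \Bigl(n-1-\tfrac{\sqrt{n-1}}{2}\,S\Bigr)|X|^{2}
\]
is reached. You extract it directly from the Gauss equation by choosing a tangent frame with $e_{1}=X/|X|$ and combining Cauchy--Schwarz on $\sum_{k\geq2}h^{\alpha}_{kk}$ with the completed square $\bigl(\sqrt{n-1}\,h^{\alpha}_{11}+\sum_{k\geq2}h^{\alpha}_{kk}\bigr)^{2}\geq0$; this is elementary and self-contained. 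The paper instead quotes Leung's Ricci estimate (Lemma~\ref{lem:2.1}) as a black box and then diagonalizes the two-variable quadratic form $F(x,y)=x^{2}-\tfrac{n-2}{\sqrt{n-1}}xy-y^{2}$ in the variables $x=\sqrt{n}\,H$, $y=\sqrt{S-nH^{2}}$; its eigenvalues $\pm\tfrac{n}{2\sqrt{n-1}}$ give $F\geq-\tfrac{n}{2\sqrt{n-1}}S$ and hence the same Ricci bound. Your route avoids an external lemma; the paper's route has the advantage that the equality case reduces to a single scalar condition in the diagonalizing coordinates, which is precisely what feeds into the rigidity analysis of Theorem~\ref{thm:1.2}.
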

We emphasize that, unlike the estimate obtained by Leung, the theorem above does not describe which geometric object achieves equality in integral inequality.

In a recent paper~\cite{Santos:23}, the authors considered compact minimal submanifolds in the unit sphere. They defined a divergence type operator and developed Bochner and Reilly formulas for it. As an application, they obtained integral inequalities involving the squared norm of the second fundamental form which extended the previous result due to Leung~\cite{Leung:83} to the first eigenvalue of the $p$-Laplacian and for manifolds with boundary. Here, our aim is to use the machinery developed in~\cite{Santos:23} in order to generalizes the Liu-Zhang result to the context of the $p$-Laplacian as well as to compact manifolds with nonempty boundary and possibility convex\footnote{We say that $\partial M$ is {\em convex} if the second fundamental form is negative semi-definite with respect to $\eta$ the outward-pointing unit normal vector.}. In other words, we prove

\begin{theorem}\label{thm:1.2}
Let $M^n$ be a compact (with possibly convex boundary) submanifold immersed in the standard Euclidean sphere $\mathbb{S}^{N}$. Let $f$ be an eigenfunction of the $p$-Laplacian of $M^n$ associated to $\lambda$ given by equation~\eqref{eq:1.1}. Then
\begin{equation}\label{eq:1.7}
\int_{M}\left(\frac{\sqrt{n-1}}{2}S-\dfrac{(n-1)(n-b(n,p)\lambda^{\frac{2}{p}})}{n}\right)|\nabla f|^{2p-2}dM \geq 0,
\end{equation}
for $p\geq2$, where
\begin{equation}\label{eq:1.8}
b(n,p)=\dfrac{n(p-1)^{2}-1}{(n-1)(p-1)^{\frac{2}{p}}}.
\end{equation} 
If the equality happen, then $p=2$. In this case, by assuming in addition that $M^{n}$ has parallel mean curvature vector field in $\mathbb{S}^{N}$, then $M^n$ is isometric to 
\begin{enumerate}[label=\alph*)]
\item[i.] the sphere $\mathbb{S}^2(\sqrt{q(q+1)/2})$, with $\lambda$ being the first eigenvalue and $N=2q$, for the closed case if $n=2$;
\item[ii.] the great sphere $\mathbb{S}^n(1)$, with $\lambda$ being the first eigenvalue, for the closed case if $n\geq 3$;
\item[iii.] the upper hemisphere $\mathbb{S}_{+}^n(1)$, with $\lambda$ being the first eigenvalue, corresponding to Dirichlet and Neumann problems.
\end{enumerate}
\end{theorem}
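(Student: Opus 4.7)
The plan is to feed an eigenfunction $f$ of $\Delta_{p}$ into the Bochner--Reilly type formula developed in \cite{Santos:23} for the divergence--type operator naturally adapted to the $p$-Laplacian, then squeeze the resulting identity between (a) the intrinsic Ricci lower bound available for any submanifold of $\mathbb{S}^{N}$, and (b) a refined Newton/Kato inequality for the Hessian, finally ruling out all boundary contributions via the convexity of $\partial M$ and the boundary condition imposed on $f$.

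First I would write the weighted Bochner identity from \cite{Santos:23} applied to $f$, schematically
\begin{equation*}
\int_{M}\!\Bigl(|\nabla^{2}f|^{2}+\operatorname{Ric}(\nabla f,\nabla f)\Bigr)|\nabla f|^{2(p-2)}\,dM
+\int_{M}\langle\nabla\Delta_{p}f,\nabla f\rangle\,dM
=\text{boundary terms},
\end{equation*}
the exact weight being the one for which the Reilly integration by parts is clean. In both the Dirichlet and Neumann cases the gradient on $\partial M$ is proportional to $\eta$; combined with the convexity assumption, the boundary integrand becomes a non-positive multiple of $|\nabla f|^{p}$ and can be dropped in an inequality of the desired sign. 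Then I would substitute $\Delta_{p}f=\lambda|f|^{p-2}f$ in the last term and integrate by parts to produce $\lambda\int_{M}|f|^{p-2}\langle\nabla f,\nabla|f|^{p-2}f\rangle\,dM$, which after simplification is a positive multiple of $\lambda^{2/p}\int_{M}|\nabla f|^{2p-2}\,dM$ once H\"older's inequality (in the form forced by $\int_{M}|\nabla f|^{p}=\lambda\int_{M}|f|^{p}$) is applied to convert the $|f|$--integrals into $|\nabla f|$--integrals; this is the origin of the factor $(p-1)^{2/p}$ in $b(n,p)$.

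For the Ricci term I would use the standard Gauss--equation bound for $M^{n}\hookrightarrow\mathbb{S}^{N}$,
\begin{equation*}
\operatorname{Ric}(X,X)\;\geq\;\Bigl((n-1)-\tfrac{\sqrt{n-1}}{2}\,S\Bigr)|X|^{2},
\end{equation*}
which immediately accounts for the $\sqrt{n-1}/2\,S$ term in \eqref{eq:1.7}. For the Hessian term I would invoke the refined Kato/Newton inequality
$|\nabla^{2}f|^{2}\geq(\Delta f)^{2}/n$ and convert $\Delta f$ into $\Delta_{p}f$ via
$\Delta_{p}f=-|\nabla f|^{p-2}\Delta f-(p-2)|\nabla f|^{p-4}\nabla^{2}f(\nabla f,\nabla f)$; projecting $\nabla^{2}f$ onto its trace part and its orthogonal part and minimising over the remaining freedom gives a lower bound of the shape $(n(p-1)^{2}-1)/(n(p-1)^{2})\cdot(\Delta_{p}f)^{2}/|\nabla f|^{2(p-2)}$, which is exactly the numerator $n(p-1)^{2}-1$ appearing in $b(n,p)$. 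Combining the two bounds and reorganising yields \eqref{eq:1.7}.

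The main obstacle I expect is the equality case. First one has to show that the refined Kato step is strict unless $p=2$, which forces $p=2$ as claimed; then, under equality in the Ricci bound and the assumption of parallel mean curvature, the restriction of the $\mathbb{R}^{N+1}$-coordinate functions to $M$ should become a family of eigenfunctions realising an Obata-type rigidity, collapsing $M$ to a round sphere. Identifying exactly which sphere --- distinguishing the $n=2$ case from $n\geq 3$, and the closed case from the hemisphere obtained under Dirichlet/Neumann --- will require a careful bookkeeping of the eigenvalue of the limiting Obata equation and the trace of the second fundamental form at $\partial M$, and that is where I expect the bulk of the delicate work to lie.
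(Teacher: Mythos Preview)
Your plan for the inequality is essentially the paper's: the same Bochner--Reilly identity from \cite{Santos:23}, the same Ricci lower bound $\operatorname{Ric}\geq (n-1)-\tfrac{\sqrt{n-1}}{2}S$, the same refined inequality $|\nabla f|^{2p-4}|\operatorname{Hess}f|^{2}\geq(\Delta_{p}f)^{2}/(n(p-1)^{2})$ producing the factor $(n(p-1)^{2}-1)/(n(p-1)^{2})$, and H\"older to convert $\int|f|^{2p-2}$ into $\int|\nabla f|^{2p-2}$. Two small corrections: in the Neumann case $\nabla f$ is tangent to $\partial M$, not proportional to $\eta$; the boundary integrand $\langle\mathcal{A}_{\eta}(\nabla^{\partial}z),\nabla^{\partial}z\rangle$ is what you drop by convexity. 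Also, the Bochner identity of \cite{Santos:23} carries additional $(p-2)$-weighted terms in $\Delta_{\infty}f$ and $|\operatorname{Hess}f(\nabla f)|^{2}$; these are non-positive for $p\geq 2$ by Cauchy--Schwarz, and that step is also needed to force $p=2$ in the equality case.

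Your equality analysis, however, has a genuine gap. The Obata equation does \emph{not} come from the ambient coordinate functions; it arises directly from equality in the pointwise Hessian inequality above, whose equality condition (together with the vanishing of the $\Delta_{\infty}$ term) forces $p=2$ and then $\operatorname{Hess}f=-(\lambda/n)fI$ for the eigenfunction $f$ itself. Obata (closed) and Escobar/Reilly (convex boundary) then give $\mathbb{S}^{n}(\lambda/n)$ or $\mathbb{S}^{n}_{+}(\lambda/n)$. To determine $\lambda$, the paper does not use the second fundamental form of $\partial M$ at all; instead it tracks equality in the Ricci step. That bound is obtained from Leung's estimate by diagonalising the quadratic form $F(x,y)=x^{2}-\tfrac{n-2}{\sqrt{n-1}}xy-y^{2}$ with $x=\sqrt{n}H$, $y=\sqrt{S-nH^{2}}$, and equality forces an explicit relation between $H$ and $S$. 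Under parallel mean curvature $S$ is then constant, and substituting into the Gauss scalar-curvature relation yields $(n-2)H=0$: for $n\geq 3$ this gives $H=S=0$ and $\lambda=n$, while for $n=2$ one gets $H=0$ and appeals to Leung's minimal classification.
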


The proof of Theorem~\ref{thm:1.2} is given in Section~\ref{main result}. As a direct consequence of Theorem~\ref{thm:1.2} we get the following Leung's type estimate:
\begin{corollary}\label{cor:1.1}
In the context of Theorem~\ref{thm:1.2}, if $S$ is constant, then
\begin{equation}\label{eq:1.9}
S\geq \frac{2\sqrt{n-1}}{n}\left(n-b(n,p)\lambda_{1,p}^{2/p}(M)\right).
\end{equation}
\end{corollary}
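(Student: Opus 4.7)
The plan is to derive inequality~\eqref{eq:1.9} as a direct pointwise-to-integral consequence of Theorem~\ref{thm:1.2}. First, I would apply \eqref{eq:1.7} at the eigenvalue $\lambda = \lambda_{1,p}(M)$, taking for $f$ any corresponding eigenfunction supplied by the variational characterization~\eqref{eq:1.3}. Under the standing assumption that $S$ is constant on $M$, the bracketed coefficient
\[
C := \frac{\sqrt{n-1}}{2}\,S - \frac{(n-1)\bigl(n-b(n,p)\lambda_{1,p}^{2/p}(M)\bigr)}{n}
\]
is a fixed real number that does not depend on the point of $M$. It therefore factors out of the integral, and \eqref{eq:1.7} collapses to
\[
C \int_M |\nabla f|^{2p-2}\,dM \geq 0.
\]

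The next step is to verify that the remaining integral factor is strictly positive, which reduces to showing that $f$ is nonconstant. This is immediate in all three regimes covered by Theorem~\ref{thm:1.2}: in the Dirichlet case, $f$ vanishes on $\partial M$ without being identically zero, so $\nabla f \not\equiv 0$; in the Neumann and closed cases, the zero-mean-type constraint $\int_M |f|^{p-2}f\,dM = 0$ from \eqref{eq:1.3}, together with $f \not\equiv 0$, excludes the constant solution. Consequently $\int_M |\nabla f|^{2p-2}\,dM > 0$, dividing through forces $C \geq 0$, and clearing the factor $\sqrt{n-1}/2$ rearranges this into exactly \eqref{eq:1.9}.

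No serious obstacle is anticipated: all the analytic and geometric content is absorbed into Theorem~\ref{thm:1.2}, and the corollary amounts to pulling a constant outside an integral. The only genuine bookkeeping is the nontriviality of $|\nabla f|$ for a first-eigenvalue eigenfunction in each boundary regime, which is disposed of by the variational argument above.
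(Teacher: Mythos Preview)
Your proposal is correct and matches the paper's intent: the paper does not spell out a separate proof of the corollary but declares it ``a direct consequence of Theorem~\ref{thm:1.2},'' and the direct consequence is precisely the factor-out-the-constant argument you give. The nontriviality of $\nabla f$ can be argued even more simply by noting that a constant $f$ would force $\Delta_p f = 0$ and hence $\lambda = 0$, but your variational justification is also fine.
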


\section{Some preliminaries and key lemmas}\label{preliminares}

Let $M^{n}$ be an $n$-dimensional connected submanifold immersed in a unit Euclidean sphere $\mathbb{S}^{N}$. Let $\{\omega_{B}\}$ be the corresponding dual coframe, and $\{\omega_{BC}\}$ the connection $1$-forms on $\mathbb{S}^{N}$. We choose a local field of orthonormal frame $\{e_{1},\ldots,e_{N}\}$ in $\mathbb{S}^{N}$, with dual coframe $\{\omega_{1},\ldots,\omega_{N}\}$, such that, at each point of $M^{n}$, $e_{1},\ldots,e_{n}$ are tangent to $M^{n}$ and $e_{n+1},\ldots,e_{N}$ are normal to $M^{n}$. We will use the following convection for indices
\begin{eqnarray*}\label{eq:2.1}
1\leq A,B,C,\ldots\leq N,\quad 1\leq i,j,k,\ldots\leq n\quad\mbox{and}\quad n+1\leq \alpha,\beta,\gamma,\ldots\leq N.
\end{eqnarray*}

With restricting on $M^{n}$, the second fundamental form $A$ and the curvature tensor $R$ of $M^{n}$ are given by
\begin{eqnarray*}\label{eq:2.2}
\omega_{i\alpha}=\sum_{j}h_{ij}^{\alpha}\omega_{j},\quad A=\sum_{i,j,\alpha}h_{ij}^{\alpha}\omega_{i}\otimes\omega_{j}\otimes e_{\alpha},
\end{eqnarray*}\label{eq:2.3}
\begin{eqnarray*}
d\omega_{ij}=\sum_{k}\omega_{ik}\wedge \omega_{kj}-\frac{1}{2}\sum_{k,l}R_{ijkl}\omega_{k}\wedge \omega_{l}.
\end{eqnarray*}
The Gauss equation is
\begin{eqnarray*}\label{eq:2.4}
R_{ijkl}=(\delta_{ik}\delta_{jl}-\delta_{il}\delta_{jk})+\sum_{\alpha}(h_{ik}^{\alpha}h_{jl}^{\alpha}-h_{il}^{\alpha}h_{jk}^{\alpha}).
\end{eqnarray*}
In particular, the components of the Ricci tensor $R_{ik}$ and the normalized scalar curvature $R$ are given, respectively, by
\begin{eqnarray}\label{eq:2.5}
R_{ik}=(n-1)\delta_{ik}+n\sum_{\alpha}\left(\sum_{j}h_{jj}^{\alpha}\right)h_{ik}^{\alpha}-\sum_{\alpha,j}h_{ij}^{\alpha}h_{jk}^{\alpha}
\end{eqnarray}
and
\begin{eqnarray}\label{eq:2.6}
R=\dfrac{1}{(n-1)}\sum_{i}R_{ii}.
\end{eqnarray}
From~\eqref{eq:2.5} and~\eqref{eq:2.6}, we get the following relation
\begin{eqnarray}\label{eq:2.7}
n(n-1)R=n(n-1)+n^{2}H^{2}-S,
\end{eqnarray}
where
\begin{equation}\label{eq:2.8}
S=\sum_{\alpha,i,j}(h_{ij}^{\alpha})^{2}\quad\mbox{and}\quad h=\dfrac{1}{n}\sum_{\alpha}\left(\sum_{k}h_{kk}^{\alpha}\right)e_{\alpha},
\end{equation}
denotes, respectively, the squared norm of the second fundamental form and the mean curvature vector field. Also, we define $H=|h|$ as the mean curvature function of $M^n$. In particular, we say that a submanifold $M^{n}$ immersed in the unit Euclidean sphere $\mathbb{S}^{N}$ has parallel mean curvature vector field if $h$ is parallel as a section of the normal bundle of $M^{n}$. It is clear that this condition implies in $H$ constant.

In order to proof our main results, we need of the following two results. The first one is a Ricci low estimate due~\cite[Main Theorem]{Leung:92}.
\begin{lemma}\label{lem:2.1}
Let $M^n$ be a submanifold of the Riemannian manifold $\mathbb{S}^{N}$. Let ${\rm Ric}$ denotes the function that assigns to each point of $M^{n}$ the minimum Ricci curvature. Then
\begin{equation}\label{eq:2.9}
{\rm Ric}\geq-\dfrac{n-1}{n}\left(S+\frac{n(n-2)}{\sqrt{n(n-1)}}H\sqrt{S-nH^2}-n-2nH^2\right).
\end{equation}
\end{lemma}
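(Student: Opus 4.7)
The plan is to derive the pointwise Ricci estimate by combining the Gauss equation with an elementary bound for traceless symmetric matrices. Fix $p\in M$ and a unit vector $v\in T_pM$; complete $v$ to an orthonormal tangent frame with $e_1=v$ and choose the normal frame $\{e_{n+1},\ldots,e_N\}$ so that $e_{n+1}$ is aligned with the mean curvature vector $h$; thus $H^{n+1}=H$ and $H^\alpha=0$ for $\alpha\ge n+2$. Contracting the Gauss equation~\eqref{eq:2.4} (equivalently, using~\eqref{eq:2.5}) then yields
\[
\mathrm{Ric}(v,v)\;=\;R_{11}\;=\;(n-1)+nH\,h_{11}^{n+1}-\sum_{\alpha,j}(h_{1j}^{\alpha})^{2}.
\]

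Next, decompose the second fundamental form into its mean and traceless parts by setting $\phi_{ij}^{\alpha}:=h_{ij}^{\alpha}-H^{\alpha}\delta_{ij}$, so that $|\Phi|^{2}:=\sum_{\alpha,i,j}(\phi_{ij}^{\alpha})^{2}=S-nH^{2}$. Substituting $h_{ij}^{\alpha}=H^{\alpha}\delta_{ij}+\phi_{ij}^{\alpha}$ into the identity above, expanding the squares, and using that only $H^{n+1}$ is nonzero, a short calculation converts the identity into
\[
R_{11}=(n-1)+(n-1)H^{2}+(n-2)H\,\phi_{11}^{n+1}-\sum_{\alpha,j}(\phi_{1j}^{\alpha})^{2}.
\]

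The key algebraic ingredient is the following: if $B$ is a traceless symmetric $n\times n$ matrix with eigenvalues $\mu_1,\ldots,\mu_n$, then the identity $|\mu_k|=\bigl|\sum_{i\ne k}\mu_i\bigr|$ together with Cauchy--Schwarz gives $\max_k\mu_k^{2}\le\frac{n-1}{n}|B|^{2}$. Since $\sum_j(\phi_{1j}^{\alpha})^{2}=|\Phi^{\alpha}e_1|^{2}\le\max_k(\mu_k^{\alpha})^{2}$, applying this with $B=\Phi^{\alpha}=(\phi_{ij}^{\alpha})$ yields
\[
\sum_{j}(\phi_{1j}^{\alpha})^{2}\le\tfrac{n-1}{n}|\Phi^{\alpha}|^{2}\qquad\text{and}\qquad|\phi_{11}^{n+1}|\le\sqrt{\tfrac{n-1}{n}}\,|\Phi^{n+1}|\le\sqrt{\tfrac{n-1}{n}}\,|\Phi|.
\]
Summing the first bound over $\alpha$ and combining with the second (using $H\ge 0$ and $n\ge 2$) gives
\[
(n-2)H\,\phi_{11}^{n+1}-\sum_{\alpha,j}(\phi_{1j}^{\alpha})^{2}\;\ge\;-(n-2)\sqrt{\tfrac{n-1}{n}}\,H|\Phi|-\tfrac{n-1}{n}|\Phi|^{2}.
\]
Substituting $|\Phi|^{2}=S-nH^{2}$ and factoring $-\frac{n-1}{n}$ recovers exactly~\eqref{eq:2.9}. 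Since this lower bound holds for every unit $v$, it holds in particular at a minimizing direction, proving the stated estimate for the minimum Ricci curvature.

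The main subtlety is that the linear and quadratic bounds both involve the single matrix $\Phi^{n+1}$, so a priori their individual extremizers need not coincide; one should verify sharpness by exhibiting a single configuration that saturates both simultaneously. The one-parameter family $\Phi^{n+1}=\mathrm{diag}\!\left(-c,\tfrac{c}{n-1},\ldots,\tfrac{c}{n-1}\right)$, $\Phi^{\alpha}=0$ for $\alpha\ge n+2$, with $c\ge 0$, saturates both bounds at once and shows that~\eqref{eq:2.9} cannot be improved.
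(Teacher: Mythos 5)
Your proof is correct. Note, however, that the paper does not prove Lemma~\ref{lem:2.1} at all: it is quoted verbatim from Leung's Main Theorem in~\cite{Leung:92}, so there is no internal argument to compare against. What you have written is a self-contained derivation that essentially reconstructs Leung's original proof: contract the Gauss equation to get $R_{11}=(n-1)+nH\,h_{11}^{n+1}-\sum_{\alpha,j}(h_{1j}^{\alpha})^{2}$ in a normal frame with $e_{n+1}$ along $h$ (when $h=0$ the frame choice is irrelevant and the estimate is immediate), pass to the traceless part $\phi_{ij}^{\alpha}=h_{ij}^{\alpha}-H^{\alpha}\delta_{ij}$, and control both the linear term and the quadratic term by the elementary bound $\max_k(\mu_k)^{2}\le\frac{n-1}{n}|B|^{2}$ for traceless symmetric $B$. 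All steps check out: the reduction of~\eqref{eq:2.9} to the displayed inequality for $(n-2)H\phi_{11}^{n+1}-\sum_{\alpha,j}(\phi_{1j}^{\alpha})^{2}$ is an exact algebraic identity (using $|\Phi|^{2}=S-nH^{2}$ and $\frac{(n-1)(n-2)}{\sqrt{n(n-1)}}=(n-2)\sqrt{\tfrac{n-1}{n}}$), the operator-norm step $|\Phi^{\alpha}e_{1}|\le\max_k|\mu_k^{\alpha}|$ is valid for symmetric matrices, and your closing remark about simultaneous saturation by $\mathrm{diag}(-c,\tfrac{c}{n-1},\ldots,\tfrac{c}{n-1})$ correctly settles sharpness (which the lemma does not actually claim, but which is reassuring). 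One incidental observation: your contracted Gauss equation is the correct one, whereas~\eqref{eq:2.5} as printed in the paper carries a spurious factor of $n$ in the mean-curvature term (it is inconsistent with~\eqref{eq:2.7}); your computation silently uses the corrected version, which is the right thing to do.
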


Before to present the second result, we will recall some facts about isometric immersions with nonempty boundary. Let us consider $\eta$ the outer unit normal field of $\partial M$. We define the shape operator $\mathcal{A}_{\eta}$ and the mean curvature function of $\partial M$ in $M^{n}$, respectively, by
\begin{equation}\label{eq:2.10}
\mathcal{A}_{\eta}(X)=-\nabla_{X}\eta\quad\mbox{and}\quad\mathcal{H}=\dfrac{1}{n-1}{\rm tr}(\mathcal{A}_{\eta}),
\end{equation}
for any $X\in\mathfrak{X}(\partial M)$. Let us denote by $\nabla^{\partial}$ and $\Delta^{\partial}$ the covariant derivative and the Laplacian operator on $\partial M$ with respect to the induced Riemannian metric. In this picture, the second key result is a suitable version of~\cite[Proposition $4.1$]{Santos:23} for our interest.

\begin{lemma}\label{lem:2.2}
Let $M^{n}$ be a compact manifold with boundary $\partial M$. If $f$ is an eigenfunction on $M^{n}$ of the $p$-Laplacian corresponding to a non-zero eigenvalue $\lambda$, then
\begin{equation}\label{eq:2.11}
\int_{M}{\rm Ric}(\nabla f,\nabla f)|\nabla f|^{2p-4}dM\leq\lambda^{\frac{2}{p}}c(n,p)\mathcal{R}(f,z,u)^{\frac{2p-2}{p}}+\int_{\partial M}|\nabla f|^{2p-4}Q(u,z)d\sigma,
\end{equation}
where
\begin{equation}\label{eq:2.12}
\mathcal{R}(f,z,u)=\|\nabla f\|_{2p-2}^{p}-\|f\|_{2p-2}^{2-p}\int_{\partial M}|f|^{p-2}|\nabla f|^{p-2}zu\,d\sigma
\end{equation}
and
\begin{equation}\label{eq:2.13}
\mathcal{Q}(u,z)=u\Delta^{\partial}z+(n-1)u^{2}\mathcal{H}+\langle\nabla^{\partial}z,\nabla^{\partial}u\rangle+\langle\mathcal{A}_{\eta}(\nabla^{\partial}z),\nabla^{\partial}z\rangle,
\end{equation}
with $z=f\big|_{\partial M}$, $u=\partial f/\partial\eta$ and $d\sigma$ denotes the Riemannian volume element on $\partial M$. Moreover, the equality holds in~\eqref{eq:2.11} if $p=2$.
\end{lemma}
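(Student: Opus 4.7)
My plan follows the structure of [Santos:23, Proposition 4.1], adapted so boundary contributions are kept explicit. The argument decomposes into four steps.

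\emph{Step 1: Weighted Bochner identity.} Introduce the divergence-type operator $Lu := \operatorname{div}\bigl(|\nabla f|^{2p-4}\nabla u\bigr)$, which is the natural linearization of $\Delta_p$ about $f$. Applying $L$ to $\tfrac{1}{2}|\nabla f|^{2}$ produces a pointwise Weitzenb\"ock identity containing exactly the quantities $|\nabla f|^{2p-4}\operatorname{Ric}(\nabla f,\nabla f)$, $|\nabla f|^{2p-4}|\nabla^{2}f|^{2}$, $|\nabla f|^{2p-4}\langle \nabla f,\nabla \Delta f\rangle$ and weight-derivative terms. This is the Bochner formula developed in [Santos:23] for $L$.

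\emph{Step 2: Integration and Reilly decomposition.} Integrate the identity over $M^{n}$ and apply the divergence theorem. The $L$-term becomes a boundary integral; along $\partial M$ one splits $\nabla f = \nabla^{\partial}z + u\,\eta$ and expands $\Delta f = \Delta^{\partial}z + (n-1)\mathcal{H}u + \partial^{2}_{\eta\eta}f$, together with the shape-operator formula for $\partial|\nabla f|^{2}/\partial\eta$. After collecting terms, the boundary integrand, multiplied by $|\nabla f|^{2p-4}$, coincides with $Q(u,z)$ of (2.13).

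\emph{Step 3: Eigenvalue equation, Kato, H\"older.} Replace $\Delta_p f$ by $\lambda|f|^{p-2}f$ in the bulk terms. Combine the $|\nabla^{2}f|^{2}|\nabla f|^{2p-4}$ contribution with the refined Kato inequality $|\nabla^{2}f|^{2}\geq \tfrac{1}{n}(\Delta f)^{2}$ to control the Hessian term by a weighted $L^{2}$-norm of $\Delta_p f$. A H\"older inequality with conjugate exponents $(p/(p-1),p)$, together with the integration-by-parts identity $\int_{M} f\,\Delta_p f\, dM = \int_{M}|\nabla f|^{p}dM - \int_{\partial M} f\, |\nabla f|^{p-2}u\, d\sigma$ (suitably reweighted by $|f|^{p-2}$ and $\|f\|_{2p-2}^{2-p}$), then repackages the bulk estimate into the form $\lambda^{2/p}c(n,p)\mathcal{R}(f,z,u)^{(2p-2)/p}$. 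The boundary residue from this integration by parts is precisely the subtracted term inside $\mathcal{R}$ in (2.12).

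\emph{Step 4: Equality for $p=2$.} When $p=2$ the weight $|\nabla f|^{2p-4}\equiv 1$, the Weitzenb\"ock identity reduces to the classical Bochner formula, Kato's inequality is sharp, and the H\"older step becomes an identity, so the chain of inequalities collapses, giving equality in (2.11). The main obstacle I expect is the bookkeeping in Step 3: aligning the H\"older exponents with the weight $|\nabla f|^{2p-4}$ so that (i) the $|\nabla f|$-factors balance out to produce the $L^{2p-2}$-norm in the definition of $\mathcal{R}$, and (ii) the boundary correction coming from $\int f\,\Delta_p f\,dM$ matches exactly the subtracted boundary term in $\mathcal{R}(f,z,u)$, fixing the precise constant $c(n,p)$.
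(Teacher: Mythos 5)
Your overall architecture (the weighted Bochner--Reilly identity for $Lu=\operatorname{div}(|\nabla f|^{2p-4}\nabla u)$ from [Santos:23], a Kato-type bound on the Hessian term, then the eigenvalue equation plus H\"older) is the same as the paper's. However, Step 3 contains a genuine gap: you invoke the classical refined Kato inequality $|\nabla^{2}f|^{2}\geq\tfrac{1}{n}(\Delta f)^{2}$, which controls the Hessian by the \emph{ordinary} Laplacian. The eigenvalue equation only controls $\Delta_{p}f$, and since $\Delta_{p}f=-|\nabla f|^{p-2}\bigl(\Delta f+(p-2)\Delta_{\infty}f\bigr)$, converting $(\Delta f)^{2}$ into $(\Delta_{p}f)^{2}$ reintroduces the $\Delta_{\infty}$ terms and destroys the constant. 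What is actually needed is the anisotropic version $|\nabla f|^{2p-4}|{\rm Hess}\,f|^{2}\geq(\Delta_{p}f)^{2}/\bigl(n(p-1)^{2}\bigr)$ (equation (2.15) of the paper, quoted from [Santos:23, Eq.\ 3.7]): the factor $(p-1)^{2}$ in the denominator is precisely what produces the numerator $n(p-1)^{2}-1$ of $b(n,p)$, and, combined with the factor $(p-1)^{(2p-2)/p}$ arising from the H\"older step, it yields the $(p-1)^{2/p}$ in the denominator of $b(n,p)$. With the Kato inequality as you state it, the constant $c(n,p)$ cannot come out right.

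Three further points. First, the integrated Bochner identity carries the extra term $-(p-2)\int_{M}|\nabla f|^{2p-6}\bigl((p-2)(\Delta_{\infty}f)^{2}|\nabla f|^{2}+2|{\rm Hess}\,f(\nabla f)|^{2}\bigr)dM$, which your Step 3 never disposes of; the paper discards it using the hypothesis $p\geq2$ together with the Cauchy--Schwarz inequality $|\nabla f|^{4}(\Delta_{\infty}f)^{2}\leq|\nabla f|^{2}|{\rm Hess}\,f(\nabla f)|^{2}$, and this step is also where the equality analysis lives. Second, your H\"older exponents $(p/(p-1),p)$ do not balance the weights: the correct application is to $\int_{M}|f|^{p-2}|\nabla f|^{p}dM$ with conjugate exponents $\tfrac{2p-2}{p}$ and $\tfrac{2p-2}{p-2}$, producing $\|\nabla f\|_{2p-2}^{p}\,\|f\|_{2p-2}^{p-2}$, and the factor $p-1$ in front comes from differentiating $|f|^{p-2}f$ in the integration by parts, not from the plain identity $\int_{M}f\,\Delta_{p}f\,dM=\int_{M}|\nabla f|^{p}dM-\int_{\partial M}f|\nabla f|^{p-2}u\,d\sigma$. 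Third, in Step 4 the claim that at $p=2$ ``Kato's inequality is sharp'' is false (equality in $|{\rm Hess}\,f|^{2}\geq(\Delta f)^{2}/n$ requires ${\rm Hess}\,f$ proportional to the metric); what the paper actually proves, and what is used in Theorem 1.2, is the converse implication: equality in (2.11) forces $p=2$, because otherwise one would have ${\rm Hess}\,f(\nabla f)=0$, which together with the equality case (2.16) of the Kato-type inequality and the eigenvalue equation gives $\lambda=0$, a contradiction.
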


\begin{proof}
From~\cite[Proposition 4.1]{Santos:23}, we have
\begin{equation}\label{eq:2.14}
\begin{split}
\int_{M}{\rm Ric}&(\nabla u,\nabla u)|\nabla u|^{2p-4}dM\\
&=\int_M\left((\Delta_p u)^2-|\nabla u|^{2p-4}|{\rm Hess}\,u|^2\right)dM+\int_{\partial M}|\nabla u|^{2 p-4}Q(u,z)d\sigma\\
&\quad-(p-2)\int_M|\nabla u|^{2p-6}\left((p-2)(\Delta_{\infty}u)^2|\nabla u|^{2}+2|{\rm Hess}\,u(\nabla u)|^2\right)dM,
\end{split}
\end{equation}
where $\mathcal{Q}(u,z)$ is given in~\eqref{eq:2.12} and $|\nabla f|^{2}\Delta_{\infty}f=\langle{\rm Hess}\,f(\nabla f),\nabla f\rangle$.

On the other hand, we note that holds the following algebraic inequality~\cite[Equation~$3.7$]{Santos:23}
\begin{equation}\label{eq:2.15}
|\nabla f|^{2p-4}|{\rm Hess}\,f|^2 \geq\dfrac{(\Delta_pf)^2}{n(p-1)^2},
\end{equation}
with equality holding if and only if
\begin{equation}\label{eq:2.16}
(p-2)|\nabla f|^{p-4}\langle{\rm Hess}\,f(\nabla f),X\rangle\nabla f+|\nabla f|^{p-2}{\rm Hess}\,f(X)=-\dfrac{1}{n}(\Delta_{p}f)I(X),
\end{equation}
for all $X\in\mathfrak{X}(M)$. Besides these, the Cauchy-Schwarz inequality guarantees
\begin{equation}\label{eq:2.17}
|\nabla f|^{4}(\Delta_{\infty}f)^{2}=\langle{\rm Hess}\,f(\nabla f),\nabla f\rangle^{2}\leq|\nabla f|^{2}|{\rm Hess}\,f(\nabla f)|^{2}
\end{equation}
and consequently
\begin{equation}\label{eq:2.18}
p(\Delta_{\infty}f)^2|\nabla f|^{2}\leq(p-2)(\Delta_{\infty}f)^2|\nabla f|^{2}+2|{\rm Hess}\,f(\nabla f)|^2.
\end{equation}
Hence, by inserting~\eqref{eq:2.15} and~\eqref{eq:2.18} in~\eqref{eq:2.14},
\begin{equation}\label{eq:2.19}
\begin{split}
\int_{M}{\rm Ric}(\nabla f,\nabla f)&|\nabla f|^{2p-4}dM\\
&\leq\left(\dfrac{n(p-1)^2-1}{n(p-1)^2}\right)\int_M\left(\Delta_pf\right)^2dM+\int_{\partial M}|\nabla f|^{2 p-4}Q(u,z)d\sigma\\
&\quad-p(p-2)\int_M|\nabla f|^{2p-4}(\Delta_{\infty}f)^2dM\\
&\leq\left(\dfrac{n(p-1)^2-1}{n(p-1)^2}\right)\int_M\left(\Delta_p f\right)^2dM+\int_{\partial M}|\nabla f|^{2p-4}Q(u,z)d\sigma,
\end{split}
\end{equation}
with equality occurring if and only if either $p=2$ or $\Delta_{\infty}f=0$.

On the other hand, by combining~\eqref{eq:1.1} with divergence's theorem and H{\"o}lder's inequality, we get
\begin{equation}\label{eq:2.21}
\begin{split}
\lambda \int_M |f|^{2p-2}dM&=(p-1)\int_M|f|^{p-2}|\nabla f|^{p}dM-\int_{\partial M}|f|^{p-2}|\nabla f|^{p-2}uzd\sigma\\
&\leq(p-1)\left(\int_M |\nabla f|^{2p-2}\right)^{\frac{p}{2p-2}}\left(\int_M | f|^{2p-2}\right)^{\frac{p-2}{2p-2}}- \int_{\partial M} |f|^{p-2}|\nabla f|^{p-2} uz d\sigma.
\end{split}
\end{equation}
By this, a direct computation gives
\begin{equation}\label{eq:2.22}
\int_M |f|^{2p-2}dM \leq\left(\frac{p-1}{\lambda}\right)^{\frac{2p-2}{p}}\mathcal{R}(f,z,u)^\frac{2p-2}{p},
\end{equation}
where $\mathcal{R}(f,z,u)$ is defined in~\eqref{eq:2.12}. Since $(\Delta_{p}f)^{2}=\lambda^{2}|f|^{2p-2}$, from~\eqref{eq:2.22}
\begin{equation}\label{eq:2.23}
\int_{M}(\Delta_{p}f)^{2}dM=\lambda^{2}\int_{M}|f|^{2p-2}dM\leq\lambda^{\frac{2}{p}}(p-1)^{\frac{2p-2}{p}}\mathcal{R}(f,z,u)^\frac{2p-2}{p}.
\end{equation}
Therefore, by inserting~\eqref{eq:2.23} in~\eqref{eq:2.21}, we obtain
\begin{equation}\label{eq:2.24}
\int_{M}{\rm Ric}(\nabla f,\nabla f)|\nabla f|^{2p-4}dM\leq\lambda^{\frac{2}{p}}c(n,p)\mathcal{R}(f,z,u)^{\frac{2p-2}{p}}+\int_{\partial M}|\nabla f|^{2p-4}Q(u,z)d\sigma,
\end{equation}
where
\begin{equation}\label{eq:2.25}
c(n,p)=\dfrac{(n-1)}{n}b(n,p).
\end{equation}
If the equality holds in~\eqref{eq:2.24}, then all the inequalities along the proof become equalities. In particular, the equality holds in~\eqref{eq:2.19} and~\eqref{eq:2.17} which implies that $p=2$ or ${\rm Hess}\,f(\nabla f)=0$. Thus, from~\eqref{eq:1.1} and~\eqref{eq:2.16}, if $\mathrm{Hess}\,f(\nabla f)=0$,
\begin{equation*}\label{eq:4.15}
0=(p-2)|\nabla f|^{p-4}\langle{\rm Hess}\,f(\nabla f),\nabla f\rangle\nabla f+|\nabla f|^{p-2}{\rm Hess}\,f(\nabla f)=-\dfrac{\lambda}{n}|f|^{p-2}fI(\nabla f).
\end{equation*}
Consequently, since $f\neq0$ and $\nabla f\neq0$ we obtain that $\lambda=0$, a contradiction. Therefore, if the equality in~\eqref{eq:2.24} holds, $p$ must be $2$.

\end{proof}

\section{Proof of Theorem~\ref{thm:1.2}}\label{main result}

From Lemma~\ref{lem:2.1}, we have that the Ricci curvature of $M^{n}$ satisfies:
\begin{equation}\label{eq:3.0}
{\rm Ric}\geq n-1+\dfrac{n-1}{n}\left(nH^2-\frac{n-2}{\sqrt{n-1}}\sqrt{n}H\sqrt{S-nH^2}-(S-nH^2)\right).
\end{equation}
In order to estimate~\eqref{eq:3.0} from below, let us the following quadratic form with eigenvalues $\pm\dfrac{n}{2\sqrt{n-1}}$:
\begin{equation}\label{eq:3.1}
F(x,y)=x^2-\frac{n-2}{\sqrt{n-1}}xy-y^2.
\end{equation}
We note that the orthogonal transformation
\begin{equation}\label{eq:3.2}
\left\{
\begin{array}{ccc}
w&=\frac{1}{2n}\left[(1+\sqrt{n-1})x+(1-\sqrt{n-1})y\right] \\
v&=\frac{1}{2n}\left[(\sqrt{n-1}-1)x+(1+\sqrt{n-1})y\right]
\end{array}
\right.
\end{equation}
is such that $x^2+y^2=w^2+v^2$. Thus, by taking $x=\sqrt{n}H$ and $y={\sqrt{S-nH^2}}$ we have $x^2+y^2=S$. Hence,
\begin{equation}\label{eq:3.3}
\begin{split}
F(x,y)&=x^2-\frac{n-2}{\sqrt{n-1}}xy-y^2=\frac{n}{2\sqrt{n-1}}(w^2-v^2)\\
&\geq -\frac{n}{2\sqrt{n-1}}(w^2+v^2)\\
&=-\frac{n}{2\sqrt{n-1}}S,
\end{split}
\end{equation}
with equality holding if and only if $v=0$. So, by inserting~\eqref{eq:3.3} in~\eqref{eq:3.0},
\begin{equation}\label{eq:3.4}
{\rm Ric}\geq\left(n-1-\dfrac{\sqrt{n-1}}{2}S\right).
\end{equation}
By replacing~\eqref{eq:3.4} in Lemma~\ref{lem:2.2},
\begin{equation}\label{eq:3.5}
\int_{M}\left(n-1-\dfrac{\sqrt{n-1}}{2}S\right)|\nabla f|^{2p-2}dM\leq\lambda^{\frac{2}{p}}c(n,p)\mathcal{R}(f,z,u)^{\frac{2p-2}{p}}+\int_{\partial M}|\nabla f|^{2p-4}\mathcal{Q}(u,z)d\sigma,
\end{equation}
where $\mathcal{R}(f,z,u)$ and $\mathcal{Q}(u,z)$ are defined in~\eqref{eq:2.12} and~\eqref{eq:2.13}, respectively.

On the other hand, if the boundary $\partial M$ is empty or satisfies the Dirichlet or Neumann boundary condition,
\begin{equation}
\mathcal{R}(f,z,u)=\left(\int_M |\nabla f|^{2p-2}dM\right)^{\frac{p}{2p-2}}\quad\mbox{and}\quad\int_{\partial M}|\nabla f|^{2p-4}\mathcal{Q}(u,z)d\sigma\leq0,
\end{equation}
and thus~\eqref{eq:2.21} becomes
\begin{equation}\label{eq:3.6}
\int_{M}\left(n-1-\dfrac{\sqrt{n-1}}{2}S\right)|\nabla f|^{2p-2}dM\leq\lambda^{\frac{2}{p}}c(n,p)\int_M |\nabla f|^{2p-2}dM.
\end{equation}
Therefore
\begin{equation}\label{eq:3.7}
\int_{M}\left(\dfrac{(n-1)(n-b(n,p)\lambda^{\frac{2}{p}})}{n}-\dfrac{\sqrt{n-1}}{2}S\right)|\nabla f|^{2p-2}dM\leq0,
\end{equation}
where $b(n,p)$ is given by~\eqref{eq:1.8}. This proves the inequality in~\eqref{eq:1.7}.

Moreover, the equality 
\begin{equation}\label{eq:3.8}
\int_{M}\left(\dfrac{(n-1)(n-b(n,p)\lambda^{\frac{2}{p}})}{n}-\dfrac{\sqrt{n-1}}{2}S\right)|\nabla f|^{2p-2}dM=0
\end{equation}
holds, then by Lemma~\ref{lem:2.2} we must have $p=2$. In particular, $b(n,2)=1$ and~\eqref{eq:3.8} turns in
\begin{equation}\label{eq:3.9}
\int_{M}\left(\dfrac{(n-1)(n-\lambda)}{n}-\dfrac{\sqrt{n-1}}{2}S\right)|\nabla f|^{2}dM=0.
\end{equation}
Beyond that, from~\eqref{eq:2.16} we have
\begin{equation}\label{eq:3.10}
\mathrm{Hess}\,f=-\dfrac{\lambda}{n}fI.
\end{equation}
Hence, if $\partial M=\emptyset$, by applying Obata's theorem~\cite[Theorem A]{Obata:62}, it follows that $M^n$ is isometric to the sphere $\mathbb{S}^n(\lambda/n)$.  In the case where the boundary $\partial M$ is nonempty and convex, we can apply~\cite[Theorem 4.3]{Escobar:90} in order to obtain that $M^{n}$ is isometric a hemisphere $\mathbb{S}^{n}_{+}(\lambda/n)$. Since the equality~\eqref{eq:3.8} imply in the equality~\eqref{eq:3.3}, we get $v=0$. Thence,
\begin{equation}\label{eq:3.11}
nH^2=\left(\frac{\sqrt{n-1}-1}{\sqrt{n-1}+1}\right)^2(S-nH^2).
\end{equation}
First we observe that if $n=2$, then $H=0$ and $M^{n}$ is a minimal submanifold of $\mathbb{S}^{N}$. Hence, by using a similar argument to made in~\cite[Theorem 3]{Leung:83} we conclude that $N=2q$ and $M^{n}$ is isometric to $\mathbb{S}^{2}\left(\sqrt{q(q+1)/2}\right)$. 

From now on, we will assume that $n\neq2$. In this case, \eqref{eq:3.11} becomes
\begin{equation}\label{eq:3.12}
S=\dfrac{2n^2}{(\sqrt{n-1}-1)^2}H^2.
\end{equation}
Being $M^n$ is isometric to $\mathbb{S}^n\left(\lambda/n\right)$ (similarly for $\mathbb{S}^{n}_{+}(\lambda/n)$), by putting~\eqref{eq:3.12} in~\eqref{eq:2.7}, we have
\begin{equation}\label{eq:3.13}
\lambda=n+\frac{n^2}{n-1}\left(1-\frac{2}{(\sqrt{n-1}-1)^2}\right)H^2.
\end{equation}
Since $H$ is constant, from identity~\eqref{eq:3.11} we must have that $S$ is constant. So, \eqref{eq:3.13} can be rewrite as follows
\begin{equation}\label{eq:3.14}
\lambda-n=-\dfrac{n}{2\sqrt{n-1}}S.
\end{equation}
Hence, by inserting~\eqref{eq:3.12} and~\eqref{eq:3.14} in~\eqref{eq:3.13} we get
\begin{equation}\label{eq:3.15}
\frac{n^2}{n-1}\left(1-\frac{2}{(\sqrt{n-1}-1)^2}\right)H^2 = -\frac{n}{2\sqrt{n-1}}\frac{2n^2}{(\sqrt{n-1}-1)^2}H^2.
\end{equation}
Thus,
\begin{equation}\label{eq:3.16}
\left(\dfrac{1}{n-1}\left((\sqrt{n-1}-1)^2-{2}\right)+\frac{n}{\sqrt{n-1}}\right)H^{2}=(n-2)(\sqrt{n-1}+1)H^{2}=0.
\end{equation}
Once that $n\neq2$, it follows $H=0$ and hence, $S=0$, from~\eqref{eq:3.12}. Therefore, by returning to~\eqref{eq:3.14}, we conclude that $n=\lambda$ and consequently $M^n$ is isometric to $\mathbb{S}^n(1)$ if $\partial M=\emptyset$ and isometric to $\mathbb{S}^{n}_{+}(1)$ otherwise. 

To ends this proof, we will assume that $\partial M$ is nonempty, convex and satisfy the Dirichlet boundary condition. Since $v=0$, from~\eqref{eq:3.0}
\begin{equation}\label{eq:3.17}
{\rm Ric}\geq n-1+\dfrac{(n-1)(\lambda-n)}{n}=\dfrac{(n-1)\lambda}{n}>0.
\end{equation}
Being $\partial M$ convex, follows that $\mathcal{H}$ is nonpositive. Hence, we can apply the classical result~\cite[Theorem 4]{Reilly:77} in order to obtain that $M^{n}$ is isometric to $\mathbb{S}^{n}_{+}(\lambda/n)$. By thinking as before, we conclude that $M^{n}$ is isometric to $\mathbb{S}^{n}_{+}(1)$.


\section*{Acknowledgment}
The first author is partially supported by CNPq, Brazil, grant 311124/2021-6 and Propesqi (UFPE). The second author is partially supported by CNPq, Brazil.

\end{document}